\documentclass[12pt]{amsart}
\usepackage{stmaryrd}
\usepackage{mathrsfs}
\usepackage{amsmath,amssymb}
\usepackage{amsfonts}
\usepackage{amsthm}
\usepackage{latexsym}
\usepackage{graphicx,color}
\def\R{\mathbb{R}}

\def\vv<#1>{\langle#1\rangle}

\def\1{\mathbf{1}}

\def\XXint#1#2{\setbox0=\hbox{$#1{#2}{\int}$}{#2}\kern-.5\wd0 }

\def\XXint#1#2#3{{\setbox0=\hbox{$#1{#2#3}{\int}$}
     \vcenter{\hbox{$#2#3$}}\kern-.5\wd0}}

\def\vv<#1>{{\left\langle#1\right\rangle}}

\def\wt{\widetilde}
\newtheorem{thm}{Theorem}[section]

\newtheorem{lem}{Lemma}[section]

\theoremstyle{definition}
\newtheorem{defn}{Definition}[section]
\theoremstyle{remark}

\numberwithin{equation}{section}

\begin{document}
\title{Faber-Krahn inequalities for first Dirichlet eigenvalues of combinatorial $p$-Laplacian on graphs with boundary}

\author{Wankai He}
\address{Department of Mathematics, Shantou University, Shantou, Guangdong, 515063, China}
\email{18wkhe@stu.edu.cn}
\author{Chengjie Yu$^1$}
\address{Department of Mathematics, Shantou University, Shantou, Guangdong, 515063, China}
\email{cjyu@stu.edu.cn}
\thanks{$^1$Research partially supported by GDNSF with contract no. 2025A1515011144 and 2026A1515012267.}
\renewcommand{\subjclassname}{%
  \textup{2020} Mathematics Subject Classification}
\subjclass[2020]{Primary 05C35; Secondary 35R02}
\date{}
\keywords{Faber-Krahn inequality, Dirichlet eigenvalue, combinatorial $p$-Laplacian}
\begin{abstract}
In this paper, we obtain sharp Faber-Krahn inequalities for the first Dirichlet eigenvalue of the combinatorial $p$-Laplacian on connected graphs with a fixed number of vertices or with a fixed number of edges. More precisely, we show that the minimum of the first $p$-Dirichlet eigenvalues of connected graphs with boundary that consist of $n$ vertices or $n$ edges is achieved only on the tadpole graph $T_{n,3}$ when $p>1$.
\end{abstract}
\maketitle
\markboth{He \& Yu}{Faber-Krahn inequality}
\section{Introduction}
In spectral geometry, the classical Faber-Krahn inequality (see \cite[P. 87]{Ch}) says that the minimum of the first Dirichlet eigenvalues for smooth bounded Euclidean domains with a fixed volume is only achieved on the round ball with the same volume. Experts also tried  to extend this important inequality to discrete setting which  makes it an important theme in extremal spectral graph theory.

In \cite{Fr}, Friedman considered Faber-Krahn inequality for regular trees. He formulated the problem of finding the Faber-Krahn inequality for domains in a homogeneous tree with a fixed total length and conjectured that the minimum is achieved by a geodesic ball in the homogeneous tree. In \cite{Pr}, Pruss disproved Friedman's conjecture. In a series of works \cite{Le97,Le02}, Leydold completely solved Friedman's problem.  In \cite{Fr2}, Friedman established Faber-Krahn inequalities for eigenvalues of the combinatorial Laplacian on graphs with a fixed number of vertices. The result was recently extended by the second named author and Yingtao Yu \cite{YY} to Steklov eigenvalues.

In \cite{KU}, Katsuda and Urakawa obtained a sharp Faber-Krahn inequality for the first Dirichlet eigenvalue of the normalized combinatorial Laplacian. In \cite{BL}, Bıyıkoğlu and Leydold formulated the general problem of finding graphs satisfying the so called Faber-Krahn property in a certain class of graphs and solved the problem for several classes of graphs such as trees with fixed numbers of vertices and boundary vertices, trees with fixed numbers of vertices and boundary vertices and with a fixed lower bound on degree of interior vertices, and trees with a given degree sequence. In \cite{ZZ12,ZZ13,ZZZ,WH,LLY}, the authors further considered the problem formulated in \cite{BL} for other classes of graphs.

In \cite{DS} and \cite{Ro}, the authors related the Faber-Krahn inequality on graphs to heat kernel bounds. In \cite{LLYZ}, the authors obtained interesting lower bounds for the first Dirichlet eigenvalue of a graph in terms of its diameter. In \cite{BaL} and \cite{HL}, the authors considered related estimates on Dirichlet eigenvalues for subgraphs of the integer lattice graph.

In this paper, motivated by the work of Katsuda-Urakawa \cite{KU}, we consider Faber-Krahn inequalities for the un-normalized combinatorial Laplacian on connected graphs with a fixed "volume". Let's first recall the main result in \cite{KU}.
\begin{thm}[Katsuda-Urakawa \cite{KU}]\label{thm-KU}
Let $G$ be a connected graph with boundary. Suppose that $|E(G)|=n\geq 4$. Then,
$$\lambda_1^{\rm nor}(G)\geq \lambda_1^{\rm nor }(T_{n,3})$$
and the equality holds if and only if $G=T_{n,3}$.
\end{thm}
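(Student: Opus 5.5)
The plan is to work with the Rayleigh quotient characterization of $\lambda_1^{\rm nor}$. For a graph $G$ with boundary $\delta G$ and interior $\Omega$,
$$\lambda_1^{\rm nor}(G)=\min_{f|_{\delta G}=0,\ f\not\equiv0}\frac{\sum_{xy\in E(G)}(f(x)-f(y))^2}{\sum_{x\in\Omega}\deg(x)f(x)^2}.$$
The proof has three stages: reduce to a simple shape, minimize within that shape, and then compare.

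\textbf{Step 1 (reduction).} Let $f$ be a first eigenfunction. Since $G$ is connected, we may take $f\ge 0$, and $f>0$ on $\Omega$ by a discrete maximum principle. The goal is to transform $G$, keeping $|E|=n$ fixed, while not increasing the Rayleigh quotient of a suitable test function. The basic moves are the following.
\begin{enumerate}
\item Edges between two boundary vertices contribute nothing to the numerator. So the extremal graph should have as few boundary vertices as possible, ideally a single boundary vertex $b$ with $f(b)=0$.
\item Edges joining interior vertices on which $f$ is nearly constant are costly in the denominator, because they raise the degree, but cheap in the numerator. Edges joining interior vertices with a large difference in $f$ are expensive in the numerator.
\end{enumerate}
Using these moves, one shows that an optimal configuration is a long path $b=v_0,v_1,\dots,v_k$ with $f$ increasing along it, and at the far end a short structure that adds degree weight $\deg(v_k)f(v_k)^2$ at almost no cost. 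A triangle $v_k v_{k+1} v_{k+2}$ in which $v_{k+1},v_{k+2}$ are interior is exactly of this type: set $f(v_{k+1})=f(v_{k+2})=f(v_k)$. The edge $v_{k+1}v_{k+2}$ then costs $0$ in the numerator and gives degree $2$ to each of $v_{k+1},v_{k+2}$, adding a lot of denominator mass. This is the mechanism that produces $T_{n,3}$.

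Concretely, I would take an arbitrary connected $G$ with $n$ edges and run a spanning-tree/BFS argument from the boundary. Order the interior vertices by the value of $f$, and rearrange edges (a "path-lengthening" or Kelmans-type shift) to reach a graph $G'$ that has $n$ edges, one boundary vertex, and is of the form path plus a cycle or pendant structure, with $\lambda_1^{\rm nor}(G')\le\lambda_1^{\rm nor}(G)$. Strictness would be tracked by checking that the eigenfunction equation fails for $f$ on $G'$ unless $G'=G$.

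\textbf{Step 2 (minimize among unicyclic and tree candidates).} Among the candidates — paths $P_{n+1}$ with one boundary end, and tadpoles $T_{n,m}$ with boundary at the tail end — compute the first eigenvalue through the recurrence $f(v_{i+1})=(2-2\lambda)f(v_i)-f(v_{i-1})$ along the tail. Write $\lambda=1-\cos\theta$, so $f(v_i)=\sin(i\theta)$, and match this to the boundary condition imposed at the cycle. This gives an explicit transcendental equation in $\theta$ for each $m$. One then shows that the root $\theta$ is smallest for $m=3$, by monotonicity of the matching condition in $m$ and by comparison with the path case $m=\infty$.

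\textbf{Main obstacle.} The hard part is Step 1: making the graph surgery rigorous while keeping the edge count fixed. In particular, one must show that extra cycles, extra boundary vertices, and branchings can be removed without increasing $\lambda_1^{\rm nor}$. I would first try to handle branching by moving a whole branch to hang at the vertex of maximal $f$. Since $\deg\cdot f^2$ is then maximized at the attachment point while the numerator is unchanged, the Rayleigh quotient does not increase. Extra cycles would be collapsed into the terminal triangle by the same device.
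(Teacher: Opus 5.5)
Your Step 1 is the entire content of the theorem, and you leave it as a program: you name ``Kelmans-type shifts'' and ``path-lengthening'' but give no specific move whose effect on the Rayleigh quotient you verify. The one concrete device you do propose fails as stated. Suppose a branch attached at $a$ through the edge $ab$ is re-hung at the maximum point $m$, keeping the values of $f$. Then the numerator changes by $(f(m)-f(b))^2-(f(a)-f(b))^2=(f(m)-f(a))(f(m)+f(a)-2f(b))$. This is positive in the typical case $f(b)<f(a)<f(m)$, for instance when the branch is a subtree whose leaves are boundary vertices with $f=0$. So ``the numerator is unchanged'' is false, and you cannot raise $f$ on the branch to compensate, because its leaves must keep $f=0$. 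Your move (1) is vacuous here: with pendant vertices as the boundary, an edge joining two boundary vertices would be a $K_2$ component. (Likewise $P_{n+1}$ has two boundary ends, not one.) In Step 2, the claimed monotonicity of the matching condition in the head length is asserted, not proved.

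The missing idea is a one-shot comparison that makes both the iterative surgery and the minimization over all tadpoles unnecessary. The paper only quotes this theorem. It says the simpler proof in \cite{HY} uses the same surgery as its proofs of Theorems \ref{thm-FK-V} and \ref{thm-FK-E}, adapted to the normalized quotient as follows. Take a positive first eigenfunction $f$, a maximum point $m$, and a shortest path $P$ with $\ell$ edges from a boundary vertex to $m$. Write the denominator as $\sum_{xy\in E(G)}\bigl(f(x)^2+f(y)^2\bigr)$. Then each of the $n-\ell$ edges off $P$ contributes at least $0$ to the numerator and at most $2f(m)^2$ to the denominator. On $T_{n,3}$, identify $P$ with the tail segment starting at the end vertex, copy $f$ there, and set $\tilde f\equiv f(m)$ on all other vertices. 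If $n-\ell\ge 3$, this gives $R_G[f]\ge R_{T_{n,3}}[\tilde f]>\lambda_1^{\rm nor}(T_{n,3})$. The inequality is strict because $\tilde f$ is constant on the triangle, so it violates the eigenvalue equation at a head vertex, as in Lemma \ref{lem-max-in-head}. This single step disposes of extra cycles, extra boundary vertices and branching all at once. If $n-\ell\le 2$, the shortest-path property leaves only $P_{n+1}$, $T_{n,3}$, $T_{n,4}$, or a path with one pendant edge attached. In the last case, deleting the pendant edge subtracts the same amount from numerator and denominator, which lowers the quotient since it is $<1$, reducing to $P_n$. These few cases are settled by test-function comparisons of the type in Lemmas \ref{lem-comparison-tadpole} and \ref{lem-comparison-path} (in their normalized form), with no transcendental equations needed.
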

Here a connected graph is considered as a graph with boundary by taking its pendant vertices, i.e. vertices of degree one, as the boundary vertices, and $T_{n,3}$ is a tadpole graph on $n$ vertices with the head of size $3$. For the definition of tadpole graph, see Section 2. Moreover, $\lambda_1^{\rm nor}(G)$ means the first Dirichlet eigenvalue for the normalized combinatorial Laplacian of $G$ which is defined as
$$\Delta^{\rm nor} f(x)=\frac{1}{\deg(x)}\sum_{y\sim x}(f(x)-f(y)).$$

In this paper, we consider Faber-Krahn inequality on graphs with boundary for the un-normalized combinatorial $p$-Laplacian:
$$\Delta_p f(x)=\sum_{y\sim x}|f(x)-f(y)|^{p-2}(f(x)-f(y))$$
for $p>1$. More precisely, we obtain the following two sharp Faber-Krahn inequalities for the first Dirichlet eigenvalue $\lambda_{1,p}$ of $\Delta_p$ on graphs with a fixed number of vertices or with a fixed number of edges.
\begin{thm}\label{thm-FK-V}
Let $G$ be a connected graph with boundary on $n\geq 4$ vertices and $p>1$. Then,
$$\lambda_{1,p}(G)\geq \lambda_{1,p}(T_{n,3})$$
and the equality holds if and only if $G=T_{n,3}$.
\end{thm}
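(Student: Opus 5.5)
The approach is a symmetrization (transplantation) argument based on the variational characterization
$$\lambda_{1,p}(G)=\min_{f|_{\delta G}=0,\ f\not\equiv 0}\frac{\sum_{xy\in E(G)}|f(x)-f(y)|^p}{\sum_{x\in \Omega}|f(x)|^p},$$
where $\delta G$ is the set of pendant vertices and $\Omega$ the set of interior vertices. Let $f$ be a first Dirichlet eigenfunction of $G$. Replacing $f$ by $|f|$ does not increase the quotient, so we may take $f\ge 0$. A discrete maximum-principle argument on each component of $G[\Omega]$ should give $f>0$ on $\Omega$; this is needed for the equality case. We may assume $\delta G\neq\emptyset$. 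Write the values of $f$ on $\Omega$ as $0<t_1\le t_2\le\dots\le t_m$ with $m=|\Omega|\le n-1$, and put $t_0=0$.

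\textbf{Step 1: layer-cake lower bound for the numerator of $G$.} For each gap $(t_{i},t_{i+1})$ with $0\le i<m$, the superlevel set $\{f\ge t_{i+1}\}$ misses $\delta G$. Since $G$ is connected, some edge crosses the level. An edge $uv$ with $f(u)=t_a<f(v)=t_b$ crosses exactly the gaps $a,\dots,b-1$. Superadditivity of $s\mapsto s^p$ for $p\ge 1$ gives
$$|t_b-t_a|^p\ \ge\ \sum_{i=a}^{b-1}(t_{i+1}-t_i)^p.$$
Summing over all edges, the numerator of $G$ is at least $\sum_i c_i\,(t_{i+1}-t_i)^p$, where $c_i\ge1$ is the number of edges crossing level $i$.

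\textbf{Step 2: transplant to $T_{n,3}$.} Label the tail of $T_{n,3}$ from its pendant vertex $v_0$ (value $0$) upward as $v_1,\dots,v_{n-3}$, and the two remaining triangle vertices $w,w'$. Assign the sorted values $t_1,\dots,t_m$ monotonically along $v_1,v_2,\dots$ ending at the triangle. If $m<n-1$ (several pendant vertices in $G$), pad by repeating $t_m$ on the extra top vertices. Padding only enlarges the denominator and contributes zero differences to the numerator. The resulting $g$ has denominator $\ge\sum_{\Omega}f^p$. Its numerator is $\sum_i (t_{i+1}-t_i)^p$ plus the cost of one extra triangle edge, which is a difference between two of the top three values.

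The remaining task is to show that $G$ pays at least this extra cost. I would split into two cases.
\begin{itemize}
\item If $G$ is a tree, it has at least two leaves, so $m\le n-2$. The padding then makes the top values equal, and the triangle edge costs nothing.
\item If $G$ contains a cycle, the cycle is not all at one level. Its top vertex lies above the level of its second-highest vertex, so some level is crossed by at least two cycle edges, giving some $c_i\ge 2$. One has to arrange that this surplus dominates the extra triangle edge. Choosing \emph{which} pair of values occupies $w,w'$ (matching the top of the cycle) should do it.
\end{itemize}
With the surplus accounted for, the quotient of $g$ is at most $\lambda_{1,p}(G)$, hence $\lambda_{1,p}(T_{n,3})\le\lambda_{1,p}(G)$.

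\textbf{Equality.} If equality holds, every inequality above must be tight. Strict superadditivity for $p>1$ forces each edge to join vertices with consecutive values, and each $c_i$ equals its minimum. Together with $f>0$ on $\Omega$ and a single boundary vertex (otherwise the padding gives strict inequality, since $t_m>0$), this should force $G$ to be a path with one triangle at the top, i.e.\ $G=T_{n,3}$.

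The main obstacle is the cycle case of Step 2 together with this rigidity analysis. There one must match the surplus crossings of a cycle in $G$ exactly against the triangle edge of $T_{n,3}$, including when the cycle lies low in the level ordering. If the direct bookkeeping fails, I would first reduce $G$ by deleting edges. Deleting an interior--interior edge that creates no new pendant vertex does not increase $\lambda_{1,p}$, so $G$ can be reduced to a unicyclic graph or a tree before transplanting.
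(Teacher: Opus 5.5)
\textbf{There is a genuine gap, in the step you flag yourself.} Every $G$ with a single pendant vertex has a cycle. This is the case $m=n-1$, the class containing $T_{n,3}$, and it is the heart of the theorem. For these graphs you never show that $G$ pays for the extra triangle edge, and the bookkeeping of Step 1 cannot do it.

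With $m=n-1$ your monotone filling puts $t_{m-2},t_{m-1},t_m$ on the triangle, so the extra edge costs $(t_m-t_{m-2})^p$. For $p>1$ this strictly exceeds $(t_m-t_{m-1})^p+(t_{m-1}-t_{m-2})^p$. Step 1, however, only retains the surplus $\sum_i(c_i-1)(t_{i+1}-t_i)^p$. Here is an example with $p=2$:
\begin{itemize}
\item Take $T_{n,4}$ with head $v_4\sim v_3\sim v_2\sim v_1\sim v_4$, and let $f$ increase along the tail to $f(v_4)=s$.
\item Put $f(v_1)=s+\epsilon$, $f(v_3)=s+1+\epsilon$, $f(v_2)=s+2+\epsilon$.
\item Each of the three top levels is crossed by exactly two edges, so the surplus is $\epsilon^2+2$. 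For small $\epsilon>0$ this is less than the extra cost $4$.
\end{itemize}
The true energy of $G$ does suffice in this example. It is the splitting of long edges by superadditivity in Step 1 that discards what you need. Re-choosing which values sit on $w,w'$ does not help, because it breaks monotonicity along the tail, and then the tail no longer costs $\sum_i(t_{i+1}-t_i)^p$.

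Three further problems:
\begin{itemize}
\item The tree case is misstated. For $G=P_n$ ($m=n-2$), one pad gives the triangle the values $t_{m-1},t_m,t_m$. The extra edge then costs $(t_m-t_{m-1})^p\neq 0$ when the maximum is attained only once. This is repairable via $c_{m-1}\ge 2$, but not as written.
\item The fallback reduction fails. In $K_{2,3}$ with a pendant path attached at one of its degree-$3$ vertices, every edge has an endpoint of degree $2$. So no edge can be removed without creating a new boundary vertex.
\item The equality analysis is only asserted.
\end{itemize}

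\textbf{The missing idea is not to preserve all values of $f$.} The paper takes a shortest path $v_n\sim\cdots\sim v_i=m$ from a boundary vertex to a maximum point $m$. On $T_{n,3}$ it keeps the path values along the tail and sets all $i-1$ remaining vertices to $f(m)$. This works for three reasons:
\begin{itemize}
\item Dropping the off-path edges can only lower the numerator.
\item Raising values to $f(m)$ can only raise the denominator.
\item For $i\geq 3$ the whole triangle is constant, so it costs nothing.
\end{itemize}
Strictness comes from Lemma~\ref{lem-max-in-head}, since the transplanted function attains its maximum on the tail.

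If $i=2$, the only remaining vertex $v_1$ is adjacent to $v_2$ and possibly to $v_3,v_4$, and to nothing else. So $G$ is $P_n$, $T_{n,4}$, $T_{n,4}$ with a chord, or $T_{n,3}$. These cases are settled by Lemmas~\ref{lem-comparison-tadpole} and \ref{lem-comparison-path}. The chorded case first deletes an edge chosen using the eigenvalue equation at $v_1$.
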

\begin{thm}\label{thm-FK-E}
Let $G$ be a connected graph with  boundary that consists of $n\geq 4$ edges and $p>1$. Then, $$\lambda_{1,p}(G)\geq \lambda_{1,p}(T_{n,3})$$
and the equality holds if and only if $G=T_{n,3}$.
\end{thm}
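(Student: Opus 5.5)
The plan is to deduce Theorem \ref{thm-FK-E} from Theorem \ref{thm-FK-V} together with a monotonicity property of tadpoles, handling trees separately. Throughout I use the variational characterization
$$\lambda_{1,p}(G)=\min\Big\{\frac{\sum_{xy\in E}|f(x)-f(y)|^p}{\sum_x |f(x)|^p}\ :\ f\not\equiv 0,\ f|_{\partial G}=0\Big\}.$$
I also use the fact that a first eigenfunction can be taken nonnegative. By a strong maximum principle for $\Delta_p$ on connected graphs, it is then positive at every interior vertex.

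\textbf{Step 1 (monotonicity of tadpoles).} I first show that $\lambda_{1,p}(T_{m,3})>\lambda_{1,p}(T_{n,3})$ for $m<n$.
\begin{itemize}
\item View $T_{m,3}$ inside $T_{n,3}$ by lengthening the tail.
\item Extend a first eigenfunction of $T_{m,3}$ by $0$ on the added tail vertices. Since it already vanishes at the old pendant vertex, the Rayleigh quotient is unchanged, so $\lambda_{1,p}(T_{n,3})\le\lambda_{1,p}(T_{m,3})$.
\item If equality held, the extension would be a first eigenfunction of $T_{n,3}$ vanishing at an interior vertex. This contradicts the positivity above, so the inequality is strict.
\end{itemize}

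\textbf{Step 2 (graphs with a cycle).} Let $G$ have $n$ edges and contain a cycle, so $m:=|V(G)|\le n$. Since $G$ has a cycle, $m\ge 3$. If $m\ge4$, Theorem \ref{thm-FK-V} gives $\lambda_{1,p}(G)\ge\lambda_{1,p}(T_{m,3})$. Step 1 then gives $\lambda_{1,p}(T_{m,3})\ge\lambda_{1,p}(T_{n,3})$, with equality throughout only if $m=n$ and $G=T_{n,3}$. The small cases ($m=3$, which would force a triangle with no boundary, or other degenerate configurations) are excluded by the requirement that $G$ has nonempty boundary, or are checked directly.

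\textbf{Step 3 (trees).} Here $|V|=n+1$ and I need the strict inequality $\lambda_{1,p}(G)>\lambda_{1,p}(T_{n,3})$.

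First suppose the tree has at least three leaves. Let $f$ be a first eigenfunction and let $\ell$ be a leaf with neighbor $v$. Delete $\ell$ and join $v$ to another leaf $\ell'$. The new edge contributes $|f(v)-f(\ell')|^p=|f(v)|^p$, which is exactly the lost term $|f(v)-f(\ell)|^p$. The resulting graph $G'$ is unicyclic with $n$ edges and $n$ vertices, and still has a pendant vertex (a third leaf). The vertex $\ell'$ becomes interior, but $f$ remains an admissible test function on $G'$ with the same quotient. Hence
$$\lambda_{1,p}(G)\ge\lambda_{1,p}(G')\ge\lambda_{1,p}(T_{n,3}).$$
For strictness: if equality held, $f$ would be a first eigenfunction of $G'$ vanishing at the interior vertex $\ell'$, which contradicts positivity.

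\textbf{Main obstacle: the path $P_{n+1}$.} When the tree is a path, the leaf-rewiring above destroys all boundary. Here I would compare directly by transplanting the symmetric positive eigenfunction $f$ of $P_{n+1}$ (vertices $v_0,\dots,v_n$) onto $T_{n,3}$.
\begin{itemize}
\item The first attempt is to fold one end of the path into the triangle. Put the values $f(v_1),\dots,f(v_{n-1})$ along the tail and the attaching vertex, and assign the two remaining triangle vertices a common value $t$, chosen, for example, as $t=f(v_1)$ or optimized.
\item The edge between the two equal-valued triangle vertices then contributes nothing. One then has to show that the quotient does not increase, using that $f$ is increasing toward the middle.
\item If the naive choice fails, I would instead compare via the inequality $\lambda_{1,p}(P_{n+1})>\lambda_{1,p}(P_n\text{ with a pendant triangle})$. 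This can be obtained by a Pr\"ufer- or shooting-type monotonicity argument for the one-dimensional discrete $p$-Laplacian equation along the tail.
\end{itemize}
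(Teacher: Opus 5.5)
Your route differs from the paper's, and most of it is sound. The zero-extension monotonicity $\lambda_{1,p}(T_{m,3})>\lambda_{1,p}(T_{n,3})$ for $m<n$ works. So does the reduction of graphs with a cycle to Theorem \ref{thm-FK-V}; no small cases arise, since a simple graph with $n\ge4$ edges has at least $4$ vertices. The leaf-rewiring for trees with at least three leaves also works, with one caveat: you must pick leaves $\ell,\ell'$ with different neighbours, otherwise $v\ell'$ is a double edge. Such a choice exists unless $G=K_{1,n}$, and there $\lambda_{1,p}=n>1/(n-1)\ge\lambda_{1,p}(T_{n,3})$ directly.

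The paper instead handles non-path $G$ by transplanting $f$ along a shortest path from a leaf to a maximum point onto $T_{n,3}$, using $|\Omega(G)|\le n-1$. It falls back on Theorem \ref{thm-FK-V} or Lemma \ref{lem-comparison-path} only in the residual case $i=2$. Your reduction avoids redoing that transplant, which is a real simplification.

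However, there is a genuine gap. The case $G=P_{n+1}$, i.e. $\lambda_{1,p}(P_{n+1})>\lambda_{1,p}(T_{n,3})$, is needed by both routes and is not proved. Your fold sketch does not close it:
\begin{itemize}
\item Both graphs have $n-1$ interior vertices. Once two triangle vertices share a value $t$, only $n-3$ interior slots remain on the neck and tail, so two values of $f$ must be discarded, not all of $f(v_1),\dots,f(v_{n-1})$ placed.
\item For the norm not to drop, $t$ must dominate the discarded values. The choice $t=f(v_1)$ makes the denominator decrease.
\item For $n\ge5$, with $T_{n,3}:w_n\sim\cdots\sim w_3\sim w_2\sim w_1\sim w_3$, what works is $\tilde f(w_k)=f(v_k)$ for $3\le k\le n$ and $\tilde f(w_1)=\tilde f(w_2)=f(v_3)$. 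The norm does not decrease since $f(v_1)\le f(v_2)\le f(v_3)$. The energy strictly decreases since the terms on $v_0v_1,v_1v_2,v_2v_3$ are dropped.
\item For $n=4$ this fold fails. The eigenfunction of $P_5$ peaks strictly at $v_2$, and the folded function has quotient $1/3$. But $\lambda_{1,p}(P_5)<1/3$ for large $p$: the values $(0,\tfrac12,1,\tfrac12,0)$ give quotient $2/9$ at $p=4$.
\item For $n=4$ one instead takes $\tilde f(w_k)=f(v_k)$, $k=1,\dots,4$. The symmetry $f(v_1)=f(v_3)$ makes the closing edge $w_1w_3$ free, and the edge $v_0v_1$ is saved.
\end{itemize}
This is exactly Lemma \ref{lem-comparison-path}. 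Since you already use Theorem \ref{thm-FK-V} as a black box, citing that lemma, or reproducing its two test functions, completes your argument. The Pr\"ufer/shooting alternative you mention is only named, not carried out.
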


Note that 
$$\lambda_{1,1}(G)=\lim_{p\to 1^+}\lambda_{1,p}(G).\ \mbox{(see \cite{Ge})}$$
So, the Faber-Krahn inequalities in Theorem \ref{thm-FK-V} and Theorem \ref{thm-FK-E} can be extended to $\lambda_{1,1}(G)$. However, the rigidity part does not hold in the case $p=1$. Indeed, by that 
$$\lambda_{1,1}(G)=h_D(G)$$ 
where $h_D(G)$ is Dirichlet Cheeger constant of $G$ (see \eqref{eq-Cheeger-constant}), it is clear that 
$$\lambda_{1,1}(G)\geq\frac{1}{n-1}=\lambda_{1,1}(T_{n,3}).$$
When $G$ is a connected graph with boundary on $n\geq 4$ vertices, the equality holds if and only if $G$ has only one pendant vertex. When $G$ consists of $n$ edges, we can characterize the rigidity more explicitly.
\begin{thm}\label{thm-FK-1}
Let $G$ be a connected graph consisting of $n\geq 4$ edges. Then, 
$$\lambda_{1,1}(G)\geq \frac{1}{n-1}$$
with equality if and only if $G=T_{n,i}$ for some $i$ with $3\leq i<n$.
\end{thm}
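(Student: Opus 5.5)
We work with the identity $\lambda_{1,1}(G)=h_D(G)$ and the Dirichlet Cheeger constant
$$h_D(G)=\min_{\emptyset\neq S\subset \Omega}\frac{|\partial S|}{|S|},$$
where $\Omega$ is the set of interior (non-pendant) vertices and $\partial S$ is the set of edges leaving $S$. The proof has three steps: a general lower bound, an exact description of the equality case, and a count of edges versus vertices.

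\textbf{Lower bound.} Boundary vertices exist, so $\Omega\neq V$. Connectedness then gives $|\partial S|\ge 1$ for every nonempty $S\subset\Omega$. Each pendant vertex lies outside $\Omega$, so $|\Omega|\le |V|-1$. A connected graph with $n$ edges has $|V|\le n+1$, and the bound $|S|\le n$ would only give $1/n$. We therefore use the sharper estimate $|S|\le n-1$. If $G$ is a tree, then $|V|=n+1$ and a tree with at least one edge has at least two leaves, so $|\Omega|\le n-1$. If $G$ is not a tree, then $|V|\le n$ and the existence of a pendant vertex gives $|\Omega|\le n-1$. Hence $|\partial S|/|S|\ge 1/(n-1)$ in all cases.

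\textbf{Equality case.} Equality forces some $S\subset\Omega$ with $|S|=n-1$ and $|\partial S|=1$, so $S=\Omega$ and $|\Omega|=n-1$.

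In the tree case this means $G$ has exactly two leaves, so $G$ is a path. For a path the set $\Omega$ has two boundary edges, so $|\partial\Omega|=2$. Also, for $n\ge 4$ every proper subset of $\Omega$ has ratio at least $1/(n-2)>1/(n-1)$. So paths give no equality.

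In the non-tree case we need $|V|=n$ and exactly one pendant vertex. Then $G$ has $n$ vertices and $n$ edges, so it is unicyclic. A unicyclic graph has one cycle with trees attached to it. Every attached tree has a leaf, and that leaf is a pendant vertex. Since there is only one pendant vertex, exactly one path is attached to the cycle at a single vertex. This is the tadpole $T_{n,i}$. The cycle length $i$ satisfies $i\ge 3$ and $i<n$, since there is a tail.

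Conversely, for $T_{n,i}$ we have $\Omega$ equal to the $n-1$ non-pendant vertices. The only edge leaving $\Omega$ is the pendant edge, so the ratio is $1/(n-1)$. Every other $S\subset\Omega$ satisfies $|\partial S|\ge 1$ and $|S|\le n-1$, so the minimum is exactly $1/(n-1)$.

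\textbf{Expected obstacle.} The main point needing care is the convention for graphs with no pendant vertex, and cycles in particular. We assume, as in the setup, that a graph with boundary has at least one pendant vertex, so that $\lambda_{1,1}$ is a genuine Dirichlet eigenvalue. The other delicate point is the equality analysis for paths when $n$ is small. This should be checked directly: for $n=4$ the ratios are $2/3$ for $S=\Omega$, $1/2$ for two adjacent interior vertices, and $1$ or $2$ for other subsets, all larger than $1/3$.
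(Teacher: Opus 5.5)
Your proof is correct and follows the paper's skeleton: $\lambda_{1,1}=h_D$, $|\partial S|\ge 1$ by connectedness, $|S|\le n-1$, and in the equality case $S=\Omega$, $|\Omega|=n-1$, with exactly one edge leaving $\Omega$.

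The difference is in the counting. The paper uses only the degree sum. Because $B(G)\neq\emptyset$, $\sum_{x\in\Omega}\deg(x)\le 2n-1$, while $\deg(x)\ge 2$ on $\Omega$, so $|\Omega|\le n-1$ at once. In the equality case the same sum gives $|B(G)|\le 2$:
\begin{itemize}
\item $|B(G)|=2$ forces all interior degrees to be $2$, so $G$ is a path;
\item $|B(G)|=1$ forces exactly one interior vertex of degree $3$ and the rest of degree $2$, so deleting the tail leaves a connected $2$-regular graph, i.e.\ a cycle.
\end{itemize}
You instead split into tree ($|V|=n+1$, at least two leaves) versus non-tree ($|V|\le n$), and in the equality case arrive at a unicyclic graph with a single pendant vertex. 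Your route shows the structure directly. The paper's avoids the case split and any appeal to the structure of unicyclic graphs.

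Two small points need fixing. First, in the unicyclic step you should say why the attached tree is a path with its attachment vertex as an endpoint. If the attachment vertex had degree at least $2$ in that tree, all of the tree's (at least two) leaves would lie off the cycle, giving two pendant vertices of $G$. Second, in your closing remark, two adjacent interior vertices of $P_5$ have two outgoing edges, so the ratio is $1$, not $1/2$. This is harmless, since $S=\Omega$ is already forced by your main argument.

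Your direct observation that $|\partial\Omega|=2$ for a path is cleaner than the paper's appeal to $h_D(P_{n+1})$. The paper states that value as $2/(n-2)$; it should be $2/(n-1)$.
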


Our arguments to prove Theorem \ref{thm-FK-V} and Theorem \ref{thm-FK-E} are similar to that of Katsuda-Urakawa \cite{KU} by taking surgeries on graphs decreasing the Rayleigh quotient.  However, the surgeries we take are much simpler than that  of Katsuda-Urakawa \cite{KU}. In \cite{HY}, we gave a much simpler proof of Katsuda-Urakawa's result (Theorem \ref{thm-KU}) and extended it to normalized combinatorial $p$-Laplacian by using the same surgery.

The organization of the rest of paper is as follows: In Section 2, we introduce some basic definitions and notations and obtain some simple spectral properties for the first Dirichlet eigenvalue of tadpole graphs; In Section 3, we prove Theorem \ref{thm-FK-V} and Theorem \ref{thm-FK-E}.
\section{Preliminaries}
In this section, we introduce some basic definitions and notations, and obtain some simple spectral properties for the first Dirichlet eigenvalue of tadpole graphs.

Let $G$ be a nontrivial connected finite graph and
$$B(G):=\{x\in V(G)\ |\ \deg(x)=1\}$$
which is viewed as the boundary of $G$. The set
$$\Omega(G):=V(G)\setminus B(G)$$
is viewed as the interior of $G$. For $p>1$, the un-normalized combinatorial $p$-Laplacian on $G$ is defined as
$$\Delta_{p,G} f(x)=\sum_{y\sim x}|f(x)-f(y)|^{p-2}(f(x)-f(y)),\ \forall x\in V(G),$$
where $f\in \R^{V(G)}$. The un-normalized combinatorial $p$-Laplacian is also called the combinatorial $p$-Laplacian for simplicity.

A real number $\lambda$ is called  a $p$-Dirichlet eigenvalue of $G$ if the following Dirichlet boundary value problem:
$$\left\{\begin{array}{ll}\Delta_{p,G} f(x)=\lambda |f|^{p-2}f(x)&x\in \Omega(G)\\
f(x)=0&x\in B(G)
\end{array}\right.$$
has a nonzero solution $f$. In this case, $f$ is called a $p$-Dirichlet eigenfunction of $G$. We denote the smallest $p$-Dirichlet eigenvalue of $G$ as $\lambda_{1,p}(G)$ and call it the first $p$-Dirichlet eigenvalue of $G$. An eigenfunction of $\lambda_{1,p}(G)$ is called a  first $p$-Dirichlet eigenfunction of $G$. The  first $p$-Dirichlet eigenvalue of $G$ can be characterized by the minimum of the $p$-Rayleigh quotient:
\begin{equation}\label{eq-Rayleigh}
\lambda_{1,p}(G)=\min_{f\in C_B(G)\setminus\{0\}}R_{p,G}[f]
\end{equation}
and the minimum is only achieved by first $p$-Dirichlet eigenfunctions. Here
$$C_B(G)=\left\{f\in \R^{V(G)}\ \Big|\ f|_B\equiv 0\right\},$$
and
$$R_{p,G}[f]=\frac{\|df\|_{p,G}^p}{\|f\|_{p,G}^p}$$
with
$$\|df\|_{p,G}^p=\sum_{\{x,y\}\in E(G)}|f(x)-f(y)|^p$$
and
$$\|f\|_{p,G}^p=\sum_{x\in V(G)}|f|^p(x)=\sum_{x\in\Omega(G)}|f|^p(x)$$
since $f\in C_B(G).$ 

The definition of the $p$-Laplacian is subtle for $p=1$ (see \cite{Ch} for example).  However, the first $1$-Dirichlet eigenvalue $\lambda_{1,1}$ can be also characterized by minimum of the Rayleigh quotient \eqref{eq-Rayleigh} for $p=1$. It is also well-known (see \cite{Gr,HW} for example) that 
$$\lambda_{1,1}(G)=h_D(G)$$
where 
\begin{equation}\label{eq-Cheeger-constant}
h_D(G)=\inf_{\emptyset\neq U\subset \Omega(G)}\frac{|E(U,U^c)|}{|U|}
\end{equation}
is the Dirichlet Cheeger constant of $G$. For any nonempty set $U\subset \Omega(G)$, setting $f=\1_U$ (the characteristic function on $U$) in \eqref{eq-Rayleigh}, one obtains 
\begin{equation*}
\lambda_{1,p}(G)\leq h_D(G)
\end{equation*}
directly.

If $B(G)\neq \emptyset$ and $p>1$, then
the first $p$-Dirichlet eigenvalue $\lambda_1(G)$ is positive and the first $p$-Dirichlet eigenfunction $f$ is nonzero and does not change signs in $\Omega(G)$ (see \cite{HW}).  Without loss of generality, we can assume that $f$ is positive on $\Omega(G)$. In this case, we call $f$ a positive first $p$-Dirichlet eigenfunction of $G$. Moreover, the first $p$-Dirichlet eigenvalue is simple in the sense that first $p$-Dirichlet eigenfunction is unique up to a constant multiple (see \cite{HW}).

Next, we give the definition of tadpole graphs.
\begin{defn}
The wedge sum of  a cycle graph and a path graph on one end vertex of the path is called a tadpole graph. The other end vertex of the path is called the end vertex of the tadpole graph. The wedge sum vertex is called the neck vertex of the tadpole graph. The path and the cycle are called the tail and head of the tadpole graph respectively.
\end{defn}
For $n>i\geq 3$, we denote the tadpole graph on $n$ vertices with the head a cycle of length $i$ as $T_{n,i}$. More precisely, we can write $T_{n,i}$ as
\begin{equation}\label{eq-tadpole-graph}
T_{n,i}: t_n\sim t_{n-1}\sim \cdots\sim t_i\sim t_{i-1}\sim \cdots \sim t_2\sim t_1\sim t_i.
\end{equation}
The path
$$P:t_n\sim t_{n-1}\sim \cdots\sim t_i$$
is the tail of $T_{n,i}.$ The cycle
$$C:t_i\sim t_{i-1}\sim \cdots \sim t_2\sim t_1\sim t_i$$
is the head of $T_{n,i}$. The vertices $t_i$ and $t_n $ are the neck vertex and end vertex of $T_{n,i}$ respectively. 

Tadpole graphs have the following useful spectral properties.
\begin{lem}\label{lem-max-in-head}
 Let $n>i\geq 3$, $p>1$ and $f$ be a positive first $p$-Dirichlet eigenfunction of $T_{n,i}$. Then, $f$ does not achieve its maximum on the tail of $T_{n,i}$.
\end{lem}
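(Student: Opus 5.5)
The plan is to argue directly from the eigenvalue equation, using a discrete minimum principle: since $\lambda_{1,p}(T_{n,i})>0$ and $f>0$ on $\Omega(T_{n,i})$, the equation
$$\Delta_{p}f(x)=\sum_{y\sim x}|f(x)-f(y)|^{p-2}(f(x)-f(y))=\lambda_{1,p}(T_{n,i})f(x)^{p-1}>0$$
holds at every interior vertex $x$. It follows that $f$ cannot attain a local minimum, over its neighbours, at an interior vertex. Indeed, if $f(y)\geq f(x)$ for all $y\sim x$, then every term in the sum is $\leq 0$, contradicting the strict positivity of the right-hand side. Note that $B(T_{n,i})=\{t_n\}$, so every vertex other than $t_n$ is interior.

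Now suppose, for contradiction, that $f$ attains its maximum $M$ at a tail vertex $t_k$ with $i\leq k\leq n$. Since $f(t_n)=0<M$, we have $i\leq k<n$. Consider the vertex set
$$S=\{t_1,\dots,t_{i-1}\}\cup\{t_i,\dots,t_{k}\},$$
which is the component containing the head after cutting the tail at $t_k$. Every vertex of $S\setminus\{t_k\}$ is interior, and all of its neighbours lie in $S$. Let $x\in S$ be a point where $f|_S$ attains its minimum. If $x\neq t_k$, then all neighbours $y$ of $x$ satisfy $f(y)\geq f(x)$, contradicting the minimum principle above. Hence the minimum of $f$ over $S$ is attained at $t_k$, where $f=M$ is the global maximum, and therefore $f\equiv M$ on $S$.

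Finally, take a head vertex $x\in\{t_1,\dots,t_{i-1}\}$, which exists since $i\geq 3$. Both neighbours of $x$ lie on the cycle, hence in $S$, so $\Delta_p f(x)=0$, while $\lambda_{1,p}f(x)^{p-1}=\lambda_{1,p}M^{p-1}>0$. This is a contradiction, so the maximum is attained only at head vertices other than the neck. The only delicate point is ensuring that the minimizing vertex in $S$ has all its neighbours inside $S$. This is exactly why $S$ is chosen as the side of $t_k$ away from $t_n$, and why the cases $k=i$ and $k>i$ are handled uniformly.
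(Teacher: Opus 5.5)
Your proof is correct, but it takes a different route from the paper's.

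\textbf{The paper's route.} It argues variationally. Assuming the maximum is attained at a tail vertex $t_j$, it replaces $f$ by the function $\wt f$ that equals $f(t_j)$ on $t_1,\dots,t_j$ and agrees with $f$ elsewhere. This does not increase $\|df\|_p^p$ and does not decrease $\|f\|_p^p$, so $\wt f$ also minimizes the Rayleigh quotient. Hence $\wt f$ is a first eigenfunction, and the eigenvalue equation then fails at $t_1$, where $\Delta_p\wt f(t_1)=0$.

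\textbf{Your route.} You never build a comparison function; you work with the equation for $f$ itself. The identity $\Delta_p f=\lambda_{1,p}f^{p-1}>0$ at interior vertices gives a strict discrete minimum principle. Applied on the boundary-free pocket $S=\{t_1,\dots,t_k\}$, whose only vertex with neighbours outside $S$ is $t_k$, it forces $f\equiv M$ on $S$. Your final paragraph is redundant: once $f\equiv M$ on $S$, the vertex $t_1\neq t_k$ is itself a minimizer of $f|_S$, which is already a contradiction.

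\textbf{What your route buys.} It needs neither the variational characterization nor the fact that minimizers of the Rayleigh quotient are eigenfunctions, and it applies to any positive function with $\Delta_p f>0$ in the interior. It also proves more, since you use that $t_k$ is a maximum only at the very end. For every $i\le k<n$, $t_k$ is the unique minimizer of $f$ on $\{t_1,\dots,t_k\}$. So $f$ is strictly increasing along the tail towards the neck, and $f(t_i)<f(t_j)$ for every head vertex $t_j$ with $j<i$.

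\textbf{What the paper's route buys.} Its flattening is the same surgery reused in Lemma \ref{lem-comparison-tadpole} and in the proofs of the main theorems. There one must compare Rayleigh quotients on \emph{different} graphs, which a maximum principle on a single graph cannot do. The variational presentation therefore keeps the paper's toolkit uniform.
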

\begin{proof}
We proceed by contradiction. Let $T_{n,i}$ be the tadpole graph as in \eqref{eq-tadpole-graph}. Suppose that $f$ achieves its maximum at some vertex $t_j$ with $i\leq j\leq n$. Let $\wt f\in \R^{V(T_{n,i})}$ be such that
$$\wt f(t_k)=\left\{\begin{array}{ll}f(t_j)& 1\leq k\leq j\\
f(t_k)&j<k\leq n.
\end{array}\right.$$
Then,
$$\|f\|_{p,T_{n,i}}^p=\sum_{k=1}^nf^p(t_k)\leq \sum_{k=1}^n\wt f^p(t_k)=\|\wt f\|_{p,T_{n,i}}^p$$
and
\begin{equation*}
\begin{split}
\|df\|_{p,T_{n,i}}^p=&\sum_{k=1}^{n-1}|f(t_{k+1})-f(t_{k})|^p+|f(t_1)-f(t_i)|^p\\
\geq&\sum_{k=j}^{n-1}|f(t_{k+1})-f(t_{k})|^p\\
=&\|d\wt f\|_{p,T_{n,i}}^p.
\end{split}
\end{equation*}
Therefore, by \eqref{eq-Rayleigh},
$$\lambda_{1,p}(T_{n,i})=R_{p,T_{n,i}}[f]\geq R_{p,T_{n,i}}[\wt f]\geq \lambda_{1,p}(T_{n,i})$$
and hence $\wt f$ is also a first $p$-Dirichlet eigenfunction of $T_{n,i}$. Then,
\begin{equation*}
\begin{split}
0<&\lambda_{1,p}(T_{n,i})\wt f^{p-1}(t_1)\\
=&\Delta_{p,T_{n,i}}\wt f(t_1)\\
=&|\wt f(t_1)-\wt f(t_i)|^{p-2}(\wt f(t_1)-\wt f(t_i))+|\wt f(t_1)-\wt f(t_2)|^{p-2}(\wt f(t_1)-\wt f(t_2))\\
=&0
\end{split}
\end{equation*}
which is ridiculous. This completes the proof of the lemma.
\end{proof}

We next show that the first Dirichlet eigenvalue of $T_{n,4}$ is greater than that of $T_{n,3}$.
\begin{lem}\label{lem-comparison-tadpole}
For $n\geq 5$ and $p>1$,
$$\lambda_{1,p}(T_{n,4})>\lambda_{1,p}(T_{n,3}).$$
\end{lem}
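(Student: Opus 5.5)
The plan is to compare the two Rayleigh quotients directly. I will use one edge-switching surgery that turns $T_{n,4}$ into $T_{n,3}$ on the same vertex set, and I will not change the test function at all.

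Write $T_{n,4}$ as in \eqref{eq-tadpole-graph}:
$$t_n\sim\cdots\sim t_4\sim t_3\sim t_2\sim t_1\sim t_4 .$$
Let $f$ be a positive first $p$-Dirichlet eigenfunction of $T_{n,4}$.

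\emph{Step 1: symmetry.} The map swapping $t_1$ and $t_3$ and fixing all other vertices is a graph automorphism of $T_{n,4}$ that preserves the boundary $\{t_n\}$. So $f$ composed with this automorphism is again a positive first eigenfunction with the same $p$-norm. By simplicity of $\lambda_{1,p}$ (quoted from \cite{HW}), it equals $f$. Hence
$$f(t_1)=f(t_3).$$

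\emph{Step 2: the surgery.} Delete the edge $\{t_1,t_4\}$ and add the edge $\{t_1,t_3\}$. The result is the triangle $t_1t_2t_3$ with the tail $t_3\sim t_4\sim\cdots\sim t_n$ attached at $t_3$. This is exactly $T_{n,3}$, and its only pendant vertex is still $t_n$. Therefore $f\in C_B(T_{n,3})$, and $\|f\|_p$ is unchanged. The energy changes by
$$|f(t_1)-f(t_3)|^p-|f(t_1)-f(t_4)|^p=-|f(t_1)-f(t_4)|^p\le 0 .$$
By \eqref{eq-Rayleigh},
$$\lambda_{1,p}(T_{n,3})\le R_{p,T_{n,3}}[f]\le \lambda_{1,p}(T_{n,4}).$$

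\emph{Step 3: strictness (the main point).} Suppose equality held. Then the energy change must vanish, so $f(t_1)=f(t_4)$, and hence
$$f(t_1)=f(t_3)=f(t_4).$$
Now use the eigen-equation of $T_{n,4}$. Write $\varphi(s)=|s|^{p-2}s$; it is strictly increasing and odd.

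At $t_3$, whose neighbours are $t_2$ and $t_4$, the $t_4$ term vanishes. This gives
$$\varphi\bigl(f(t_3)-f(t_2)\bigr)=\lambda f(t_3)^{p-1}>0,$$
so $f(t_3)>f(t_2)$.

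At $t_2$, whose neighbours are $t_1$ and $t_3$, we have $f(t_1)=f(t_3)$, which gives
$$2\varphi\bigl(f(t_2)-f(t_3)\bigr)=\lambda f(t_2)^{p-1}>0,$$
so $f(t_2)>f(t_3)$. This is a contradiction, so the inequality is strict.

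I expect strictness to be the main obstacle. If one tries to show that $f$ would also be an eigenfunction of $T_{n,3}$, the equations are all consistent, because every altered edge carries zero difference. So the contradiction has to come from the local equations at $t_2$ and $t_3$ inside $T_{n,4}$, as in Step 3. Positivity of $\lambda_{1,p}$ and of $f$ on $\Omega$, both quoted in the preliminaries, are what make those two equations incompatible.
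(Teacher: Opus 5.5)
Your proof is correct and is essentially the paper's argument. The paper uses the same symmetry $f(t_1)=f(t_3)$ and the same edge switch $\{t_1,t_4\}\to\{t_1,t_3\}$ with the unchanged test function; it gets strictness directly from $f(t_1)>f(t_4)$, read off the eigen-equation at $t_1$ using that the maximum sits at $t_2$. The only real difference is that the paper first splits according to where the maximum lies (via Lemma \ref{lem-max-in-head}) and handles a maximum at $t_3$ by a separate flattening surgery. Your uniform contradiction argument makes that split unnecessary, and your equation at $t_2$ with $f(t_1)=f(t_3)$ already shows that case never occurs.
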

\begin{proof}
Let $f$ be a positive first $p$-Dirichlet eigenfunction of
$$T_{n,4}: v_n\sim v_{n-1}\sim \cdots\sim v_4\sim v_3\sim v_2\sim v_1\sim v_4.$$
By symmetry and that $\lambda_{1,p}(T_{n,4})$ is of multiplicity one, we know that
\begin{equation}\label{eq-f-v1=v3}
f(v_1)=f(v_3).
\end{equation}
Let $m$ be a maximum vertex of $f$. By Lemma \ref{lem-max-in-head}, $m$ is not in the tail of $T_{n,4}$. So, without loss of generality, $m=v_3$ or $m=v_2$. Let
$$T_{n,3}: u_n\sim u_{n-1}\sim\cdots\sim u_{3}\sim u_2\sim u_1\sim u_3.$$

When $m=v_3$, let $\wt f\in \R^{V(T_{n,3})}$ be such that
\begin{equation*}
\wt f(u_k)=\left\{\begin{array}{ll}f(v_k)&3\leq k\leq n\\
f(m)&1\leq k<3.
\end{array}\right.
\end{equation*}
Then,
\begin{equation*}
\|f\|_{p,T_{n,4}}^p=\sum_{k=1}^nf^p(v_k)\leq \sum_{k=1}^n\wt f^p(u_k)=\|\wt f\|_{p,T_{n,3}}^p
\end{equation*}
and
\begin{equation*}
\begin{split}
\|df\|_{p,T_{n,4}}^p=&\sum_{k=1}^{n-1}|f(v_{k+1})-f(v_k)|^p+|f(v_1)-f(v_4)|^p\\
\geq&\sum_{k=3}^{n-1}|f(v_{k+1})-f(v_k)|^p\\
=&\|d\wt f\|_{p,T_{n,3}}^p.
\end{split}
\end{equation*}
Hence, by \eqref{eq-Rayleigh},
$$\lambda_{1,p}(T_{n,4})=R_{p,T_{n,4}}[f]\geq R_{p,T_{n,3}}[\wt f]>\lambda_{1,p}(T_{n,3}).$$
The last inequality is strict because $\wt f$ is not a first $p$-Dirichlet eigenfunction for $T_{n,3}$ by Lemma \ref{lem-max-in-head}.

When $m=v_2$, let $\wt f\in \R^{V(T_{n,3})}$ be such that
\begin{equation*}
\wt f(u_k)=f(v_k)
\end{equation*}
for $k=1,2,\cdots,n$. By that
\begin{equation*}
\begin{split}
0<&\lambda_{1,p}(T_{n,4})f^{p-1}(v_1)\\
=&\Delta_{p,T_{n,4}} f(v_1)\\
=&|f(v_1)-f(v_4)|^{p-2}(f(v_1)-f(v_4))+|f(v_1)-f(v_2)|^{p-2}(f(v_1)-f(v_2))\\
\leq& |f(v_1)-f(v_4)|^{p-2}(f(v_1)-f(v_4))
\end{split}
\end{equation*}
we have $f(v_1)-f(v_4)>0$. Moreover, by \eqref{eq-f-v1=v3}, 
\begin{equation*}
\begin{split}
\|df\|_{p,T_{n,4}}^p=&\sum_{k=1}^{n-1}|f(v_{k+1})-f(v_k)|^{p}+|f(v_1)-f(v_4)|^p\\
>&\sum_{k=1}^{n-1}|f(v_{k+1})-f(v_k)|^p+|f(v_3)-f(v_1)|^p\\
=&\|d\wt f\|_{p,T_{n,3}}^p,\\
\end{split}
\end{equation*}
and it is clear that
$$\|f\|_{p,T_{n,4}}^p=\|\wt f\|_{p,T_{n,3}}^p.$$
So, by \eqref{eq-Rayleigh},
$$\lambda_{1,p}(T_{n,4})=R_{p,T_{n,4}}[f]>R_{p,T_{n,3}}[\wt f]\geq \lambda_{1,p}(T_{n,3}).$$
This completes the proof of the lemma.
\end{proof}
Finally, we compare the first $p$-Dirichlet eigenvalue of a path graph on $n$ or $n+1$ vertices to that of $T_{n,3}$. We denote the path graph on $n$ vertices as $P_n$.
\begin{lem}\label{lem-comparison-path}
For any $n\geq 4$ and $p>1$,
$$\lambda_{1,p}(P_n)>\lambda_{1,p}(P_{n+1})>\lambda_{1,p}(T_{n,3}).$$
\end{lem}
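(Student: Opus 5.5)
Both inequalities will be proved by transplanting a positive first eigenfunction from the larger-eigenvalue graph to the smaller one, checking that the $p$-Rayleigh quotient does not increase, and then using \eqref{eq-Rayleigh} together with a strictness argument.

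Recall the convention: in $P_n: w_1\sim w_2\sim\cdots\sim w_n$ the boundary is $\{w_1,w_n\}$, and the interior is the $n-2$ middle vertices.

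\textbf{First inequality.} Let $f$ be a positive first $p$-Dirichlet eigenfunction of $P_n$. Embed $P_n$ into $P_{n+1}$ by defining $\wt f(w_k)=f(w_k)$ for $k\le n$ and $\wt f(w_{n+1})=0$. This makes $w_n$ an interior vertex of $P_{n+1}$ carrying the value $0$, and adds the single edge $w_nw_{n+1}$, whose contribution is $|0-0|^p=0$. Hence $R_{p,P_{n+1}}[\wt f]=R_{p,P_n}[f]=\lambda_{1,p}(P_n)$.

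For strictness, note that $\wt f$ vanishes at the interior vertex $w_n$ while all its other interior values are positive. If $\wt f$ were a first eigenfunction it would not change sign, and the eigen-equation at $w_n$ would read
$$0=\lambda\cdot 0=\Delta_p\wt f(w_n)=-|f(w_{n-1})|^{p-1}<0,$$
a contradiction. So $\wt f$ is not a first eigenfunction, and therefore $\lambda_{1,p}(P_{n+1})<\lambda_{1,p}(P_n)$.

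\textbf{Second inequality.} Let $f$ be a positive first eigenfunction of $P_{n+1}$. By symmetry and simplicity of $\lambda_{1,p}$, $f$ is symmetric under $w_k\mapsto w_{n+2-k}$. I expect it to be unimodal: it increases up to the middle, since otherwise truncating at a maximum would lower the Rayleigh quotient, just as in Lemma \ref{lem-max-in-head}.

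The plan is to fold the two ends of $P_{n+1}$ into the triangle of $T_{n,3}$. The triangle $u_1,u_2,u_3$ is entirely interior, since the only boundary vertex of $T_{n,3}$ is $u_n$. So I keep the half-path from one boundary vertex, $u_n\sim\cdots\sim u_3$, and pin the triangle at a high value.

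The concrete choice mirrors Lemma \ref{lem-comparison-tadpole}. Let $M=\max f$, attained at the middle vertex $w_c$ with $c=\lceil (n+2)/2\rceil$. Define $\wt f$ on $T_{n,3}$ by
$$\wt f(u_k)=f(w_{k+c-3})\quad\text{for } 3\le k\le n-c+3,\qquad \wt f=M\ \text{on the remaining vertices},$$
where the remaining vertices are the triangle together with the tail vertices near the neck. Then $\|\wt f\|_p\ge\|f\|_p$ should follow by a counting argument: $T_{n,3}$ has $n-1$ interior vertices, while $P_{n+1}$ has $n-1$ interior values, all bounded by $M$. The energy $\|d\wt f\|_p^p$ equals the energy of only one half of $P_{n+1}$, which is at most the full energy. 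Hence $R_{p,T_{n,3}}[\wt f]\le\lambda_{1,p}(P_{n+1})$, and strictness follows from Lemma \ref{lem-max-in-head}: $\wt f$ attains its maximum on the tail, so it is not a first eigenfunction of $T_{n,3}$.

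\textbf{Main obstacle.} The delicate step is the index bookkeeping in the second inequality: the vertices must be matched so that each interior value of $f$ is dominated by a distinct value of $\wt f$. Because of symmetry it suffices to use only one side: the sorted values of one half, each paired with $M$, dominate the values of the other half.

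If this is awkward, the fallback is simpler. Take $\wt f(u_k)=f(w_{k})$ read along one monotone half starting at the boundary end $u_n$, and set everything beyond the peak equal to $M$. The only thing to verify is that the number of vertices set to $M$ is at least the number of vertices discarded from $P_{n+1}$; this holds because $T_{n,3}$ and $P_{n+1}$ both have exactly $n-1$ interior vertices.
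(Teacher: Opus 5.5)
Your proof of the first inequality is the paper's argument and is correct. For the second inequality you take a different route from the paper. That route works for $n\ge 5$ once the indices are fixed, but it cannot handle $n=4$, and that case is a genuine gap.

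First, the formula is mis-indexed. The rule $\wt f(u_k)=f(w_{k+c-3})$ for $3\le k\le n-c+3$ puts the peak at the neck $u_3$ and stops at $u_{n-c+3}$ with the positive value $f(w_n)$. So the boundary vertex $u_n$ gets a positive value: $M$ when $n\ge5$, and $f(w_4)$ when $n=4$. Your fallback $\wt f(u_k)=f(w_k)$ likewise gives $\wt f(u_n)=f(w_n)>0$. In both versions $\wt f\notin C_B(T_{n,3})$.

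Your verbal description is the right object: set $\wt f(u_k)=f(w_{k+1})$ for $c-1\le k\le n$, and $\wt f\equiv M$ on $u_1,\dots,u_{c-2}$. The peak then sits on the tail only if $c-1\ge 3$, i.e.\ $n\ge5$. In that range everything goes through:
\begin{itemize}
\item the count works, since both graphs have $n-1$ interior vertices and the $c-2$ discarded values $f(w_2),\dots,f(w_{c-1})$ are each at most $M$;
\item the energy of $\wt f$ is that of one half of $P_{n+1}$;
\item strictness follows from Lemma~\ref{lem-max-in-head}, because $\wt f(u_3)=M$.
\end{itemize}
This is a legitimate alternative to the paper's $n\ge5$ case. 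The paper instead drops only the first three edges, fills $u_1,u_2$ with $g(u_3)$, and relies on $g(u_3)\ge g(u_2)>g(u_1)$. Your version is the counting mechanism the paper later uses in Theorem~\ref{thm-FK-E}. It also does not need the unimodality you only ``expect'': any maximum point lies within $\lfloor n/2\rfloor\le n-3$ of one of the two boundary vertices.

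The genuine gap is $n=4$, which the lemma includes. The tail of $T_{4,3}$ is the single edge $u_4u_3$, but the half of $P_5$ from $w_5$ to the peak $w_3$ has two edges, so it cannot be laid along the tail. The peak lands in the triangle, $\wt f$ picks up the energy of an extra triangle edge, and its maximum is no longer on the tail. So neither your half-energy claim nor the appeal to Lemma~\ref{lem-max-in-head} applies.

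Forcing the peak onto the neck does not rescue the argument. The values $0,M,M,M$ on $u_4,u_3,u_2,u_1$ give quotient $1/3$. The test function $(0,\tfrac12,1,\tfrac12,0)$ shows $\lambda_{1,p}(P_5)\le 4/(2^p+2)<1/3$ once $p\ge 4$.

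A separate argument is needed here, for example the paper's fold. In your notation it is $\wt f(u_k)=f(w_{k+1})$ for $1\le k\le 4$. Symmetry gives $f(w_2)=f(w_4)$, so the new edge $u_1u_3$ costs nothing. The discarded edge $w_1w_2$ carried energy $f(w_2)^p>0$, and the $p$-norms coincide. Hence $\lambda_{1,p}(P_5)>R_{p,T_{4,3}}[\wt f]\ge\lambda_{1,p}(T_{4,3})$.
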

\begin{proof}
Let
$$P_n:v_n\sim v_{n-1}\sim\cdots\sim v_2\sim v_1$$
and $f$ be a positive first $p$-Dirichlet eigenfunction of $P_n$. Let
$$P_{n+1}: u_{n}\sim u_{n-1}\sim\cdots\sim u_2\sim u_1\sim u_0$$
and
$$\wt f(u_k)=\left\{\begin{array}{ll}f(v_k)&1\leq k\leq n\\
0&k=0.
\end{array}\right.$$
Then, by \eqref{eq-Rayleigh},
$$\lambda_{1,p}(P_n)=R_{p,P_n}[f]=R_{p,P_{n+1}}[\wt f]>\lambda_{1,p}(P_{n+1}).$$
The last inequality is strict because $\wt f$ is not a $p$-Dirichlet eigenfunction of $P_{n+1}$ since $\wt f$ is not everywhere positive in the interior of $P_{n+1}$.

Moreover, let $g$ be a positive first $p$-Dirichlet eigenfunction of $P_{n+1}$ and
$$T_{n,3}: w_n\sim w_{n-1}\sim\cdots\sim w_3\sim w_2\sim w_1\sim w_3.$$
When $n\geq 5$, let
\begin{equation}
\wt g(w_k)=\left\{\begin{array}{ll}g(u_k)&k\geq 3\\
g(u_3)&k=1,2.
\end{array}\right.
\end{equation}
By that $g(u_3)\geq g(u_2)>g(u_1)>0$, we have
\begin{equation*}
\|g\|_{p,P_{n+1}}^p=\sum_{k=1}^ng^p(u_k)<\sum_{k=1}^n \wt g^p(w_k)=\|\wt g\|_{p,T_{n,3}}^p
\end{equation*}
and
\begin{equation*}
\begin{split}
\|dg\|_{p,P_{n+1}}^p=&\sum_{k=1}^{n}|g(u_{k})-g(u_{k-1})|^p\\
>&\sum_{k=4}^{n}|g(u_k)-g(u_{k-1})|^p\\
=&\|d\wt g\|_{p,T_{n,3}}^p.
\end{split}
\end{equation*}
So, by \eqref{eq-Rayleigh},
$$\lambda_{1,p}(P_{n+1})=R_{p,P_{n+1}}(g)>R_{p,T_{n,3}}(\wt g)\geq \lambda_{1,p}(T_{n,3}).$$

When $n=4$, let
\begin{equation*}
\wt g(w_k)=g(u_k)\ \mbox{for $k=1,2,3,4.$}
\end{equation*}
Note that $g(u_3)=g(u_1)$ by symmetry. So,
\begin{equation*}
\begin{split}
\|dg\|_{p,P_5}^p=&\sum_{k=1}^4|g(u_k)-g(u_{k-1})|^p\\
>&\sum_{k=2}^4|g(u_k)-g(u_{k-1})|^p\\
=&\sum_{k=2}^4|\wt g(w_k)-\wt g(w_{k-1})|^p+|\wt g(w_1)-\wt g(w_3)|^p\\
=&\|d\wt g\|_{p,T_{4,3}}^p,
\end{split}
\end{equation*}
and it is clear that
$$\|g\|_{p,P_5}^p=\|\wt g\|_{p,T_{4,3}}^p.$$
Thus, by \eqref{eq-Rayleigh},
$$\lambda_{1,p}(P_{5})=R_{p,P_{5}}[g]>R_{p,T_{4,3}}[\wt g])\geq\lambda_{1,p}(T_{4,3}).$$
This completes the proof of the lemma.
\end{proof}
We woul like to mention that the spectral properties above for tadpole graphs are mainly given in  Katsuda-Urakawa \cite{KU} for normalized combinatorial Laplacian and the arguments above are similar to that of Katusda-Urakawa \cite{KU}. Here, we just modify them for the un-normalized combinatorial $p$-Laplacian.

\section{Proofs of main results}
In this section, we prove the main results. We first prove Theorem \ref{thm-FK-V}.
\begin{proof}[Proof of Theorem \ref{thm-FK-V}]
Let $f$ be a positive first $p$-Dirichlet eigenfunction of $G$. Suppose that
$$f(m)=\max_{v\in V(G)}f(v).$$
Let
$$P:v_n\sim v_{n-1}\sim \cdots\sim v_i=m$$
be a shortest path joining the boundary vertex $v_n$ to $m=v_i$ and suppose that
$v_1,v_2,\cdots,v_n$ are the vertices of $G$. Because $f(m)>0$ and $P$ is a shortest path joining $v_n$ and $v_i$, we know that $i\geq 2$.

When $i\geq 3$, let
$$T_{n,3}:u_n\sim u_{n-1}\sim \cdots \sim u_i\sim\cdots \sim u_3\sim u_2\sim u_1\sim u_3,$$
and
$$\wt f(u_k)=\left\{\begin{array}{ll}f(v_k)&i\leq k\leq n\\
f(m)&1\leq k<i.
\end{array}\right.$$
Then,
\begin{equation*}
\|f\|_{p,G}^{p}=\sum_{k=1}^nf^p(v_k)\leq \sum_{k=1}^n\wt f^p(u_k)=\|\wt f\|_{p,T_{n,3}}^p
\end{equation*}
and
\begin{equation*}
\begin{split}
\|df\|_{p,G}^p\geq \sum_{k=i}^{n-1} |f(v_{k+1})-f(v_{k})|^p=\sum_{k=i}^{n-1} |\wt f(v_{k+1})-\wt f(v_{k})\|^p=\|d\wt f\|_{p,T_{n,3}}^p.
\end{split}
\end{equation*}
So, by \eqref{eq-Rayleigh},
$$\lambda_{1,p}(G)=R_{p,G}[f]\geq R_{p,T_{n,3}}[\wt f]>\lambda_{1,p}(T_{n,3}).$$
The last inequality is strict because $\wt f$ is not a positive first $p$-Dirichlet eigenfunction of $T_{n,3}$ by Lemma \ref{lem-max-in-head}.

When $i=2$, we know that $v_1\sim v_2$ in $G$ because $v_2$ is not a boundary vertex. Moreover, $v_1$ can not be adjacent to $v_j$ with $j\geq 5$ because $P$ is the shortest path joining $v_n$ to $v_2$. Then, by the adjacency of $v_1$ to $v_3$ and $v_4$, we  only have the following four cases:

(1) $v_1\not\sim v_3$ and $v_1\not\sim v_4$: In this case, $G=P_n$, by Lemma \ref{lem-comparison-path}, $$\lambda_{1,p}(G)>\lambda_{1,p}(T_{n,3}).$$

(2) $v_1\not\sim v_3$ and $v_1\sim v_4$: In this case, $G=T_{n,4}$, by Lemma \ref{lem-comparison-tadpole}, $$\lambda_{1,p}(G)>\lambda_{1,p}(T_{n,3}).$$

(3) $v_1\sim v_3$ and $v_1\sim v_4$: In this case, by that
\begin{equation*}
\begin{split}
0<&\lambda_1(G) f^{p-1}(v_1)\\
=&\Delta_{p,G} f(v_1)\\
=&|f(v_1)-f(v_2)|^{p-2}(f(v_1)-f(v_2))\\
&+|f(v_1)-f(v_3)|^{p-2}(f(v_1)-f(v_3))+|f(v_1)-f(v_4)|^{p-2}(f(v_1)-f(v_4))\\
\leq&|f(v_1)-f(v_3)|^{p-2}(f(v_1)-f(v_3))+|f(v_1)-f(v_4)|^{p-2}(f(v_1)-f(v_4)),\\
\end{split}
\end{equation*}
we have $f(v_1)-f(v_3)>0$ or $f(v_1)-f(v_4)>0$.

If $f(v_1)-f(v_3)>0$, let $G'=G-\{v_1,v_3\}$. It is clear that $G'=T_{n,4}$. So, by \eqref{eq-Rayleigh} and Lemma \ref{lem-comparison-tadpole},
$$\lambda_{1,p}(G)=R_{p,G}[f]>R_{p,G'}[f]\geq \lambda_{1,p}(G')=\lambda_{1,p}(T_{n,4})>\lambda_{1,p}(T_{n,3}).$$

If $f(v_1)-f(v_4)>0$, let $G'=G-\{v_1,v_4\}.$ Then $G'=T_{n,3}$. So, by \eqref{eq-Rayleigh},
$$\lambda_{1,p}(G)=R_{p,G}[f]>R_{p,G'}[f]\geq \lambda_{1,p}(G')=\lambda_{1,p}(T_{n,3}).$$

(4) $v_1\sim v_3$ and $v_1\not\sim v_4$: In this case, $G=T_{n,3}$.

This completes the proof of the theorem.
\end{proof}

Finally, we prove Theorem \ref{thm-FK-E}.
\begin{proof}[Proof of Theorem \ref{thm-FK-E}]
If $G$ is a path graph, by Lemma \ref{lem-comparison-path}, $$\lambda_{1,p}(G)=\lambda_{1,p}(P_{n+1})>\lambda_{1,p}(T_{n,3}).$$
 So, we assume that $G$ is not a path graph and hence there exists a vertex $v\in V(G)$ such that  $\deg v\geq 3$. Then, by that
\begin{equation*}
\begin{split}
2n>\sum_{x\in\Omega(G)}\deg(x)=\deg (v)+\sum_{\Omega(G)\setminus\{v\}}\deg(x)\geq 3+2(|\Omega(G)|-1),
\end{split}
\end{equation*}
we have
\begin{equation}\label{eq-Omega}
|\Omega(G)|\leq n-1.
\end{equation}

Let $f$ be a positive first $p$-Dirichlet eigenfunction of $G$ and $m\in V(G)$ be a maximum point of $f$. Let
$$P: v_n\sim v_{n-1}\sim \cdots\sim v_i=m$$
be a shortest path joining the boundary vertex $v_n$ to $v_i=m$. Note that if $|E(G)|-|E(P)|=1$, then $G={P_{n+1}}$ because $m$ is not boundary vertex and $P$ is a shortest path joining $v_n$ to $m$. Thus $|E(G)|-|E(P)|\geq 2$.

When $|E(G)|-|E(P)|\geq 3$, we have $i\geq 3$. Let
$$T_{n,3}:u_n\sim u_{n-1}\sim\cdots\sim u_{i}\sim \cdots\sim u_3\sim u_2\sim u_1\sim u_3$$
and
$$\wt f(u_k)=\left\{\begin{array}{ll}f(v_k)&i\leq k\leq n\\
f(m)&1\leq k<i.
\end{array}\right.$$
Then, by \eqref{eq-Omega}, we have
\begin{equation*}
\begin{split}
\|f\|_{p,G}^p=&\sum_{k=i}^{n-1}f^p(v_k)+\sum_{x\in\Omega(G)\setminus\{v_i,v_{i+1},\cdots,v_{n-1}\}}f^p(x)\\
\leq&\sum_{k=i}^{n-1} \wt f^p(u_k)+\left(|\Omega(G)|-(n-i)\right)f^p(m)\\
\leq&\sum_{k=i}^{n-1} \wt f^p(u_k)+(i-1)f^p(m)\\
=&\|\wt f\|_{p,T_{n,3}}^{p}
\end{split}
\end{equation*}
and
\begin{equation*}
\|df\|_{p,G}^p\geq \sum_{k=i}^{n-1}|f(v_{k+1})-f(v_k)|^p=\sum_{k=i}^{n-1}|\wt f(u_{k+1})-\wt f(u_k)|^p=\|d \wt f\|_{p,T_{n,3}}^p.
\end{equation*}
So, by \eqref{eq-Rayleigh},
\begin{equation}
\lambda_{1,p}(G)=R_{p,G}[f]\geq R_{p,T_{n,3}}[\wt f]> \lambda_{1,p}(T_{n,3}).
\end{equation}
The last inequality is strict because $\wt f$ is not a positive first $p$-Dirichlet eigenfunction of $T_{n,3}$ by Lemma \ref{lem-max-in-head}.

When $|E(G)|-|E(P)|=2$, we have $i=2$. Because $v_2$ is not a boundary vertex and $P$ is a shortest path joining $v_n$ and $v_2$, there is another vertex $v_1$ adjacent to $v_2$. If there is no other vertex of $G$, then $G$ is a connected graph on $n$ vertices with nonempty boundary. By Theorem \ref{thm-FK-V}, we know that
$$\lambda_{1,p}(G)\geq \lambda_{1,p}(T_{n,3})$$
with equality if and only if $G=T_{n,3}$. Otherwise, let $v_0$ be another vertex of $G$ which should be one of the end points of the remaining edge. Because $G\neq P_{n+1}$, $v_0\sim v_j$ for some $j=2,3,\cdots,n-1$. Let $G'=G-\{v_0\}$. Then $G'=P_n$ and
by \eqref{eq-Rayleigh} and Lemma \ref{lem-comparison-path},
$$\lambda_{1,p}(G)=R_{p,G}[f]>R_{p,G'}[f]\geq \lambda_{1,p}(P_n)>\lambda_{1,p}({T_{n,3}}).$$
This completes the proof of the theorem.
\end{proof}
Finally, we come to prove Theorem \ref{thm-FK-1}.
\begin{proof}[Proof of Theorem \ref{thm-FK-1}]
For any nonempty subset $U$ of $\Omega(G)$, note that
\begin{equation}\label{eq-U-1}
2n>\sum_{x\in U}\deg(x)\geq 2|U|.
\end{equation}
So, $|U|\leq n-1$. Thus, 
\begin{equation}\label{eq-U-2}
\frac{|E(U,U^c)|}{|U|}\geq \frac{1}{|U|}\geq \frac{1}{n-1},
\end{equation}
and
$$\lambda_{1,1}(G)=h_{D}(G)\geq\frac{1}{n-1}.$$
If the equality holds,  let $U\subset \Omega(G)$ be such that 
$$\frac{|E(U,U^c)|}{|U|}=h_D(G)=\frac{1}{n-1}.$$
Then, by \eqref{eq-U-1} and \eqref{eq-U-2}, we know that $|E(U,U^c)|=1$ and $|U|=n-1$. Then, by that 
\begin{equation}\label{eq-B}
\begin{split}
1\leq&|B(G)|\\
=&2n-\sum_{x\in \Omega(G)}\deg(x)\\
\leq&2n-2(\Omega(G)-|U|)-\sum_{x\in U}\deg(x)\\
\leq& 2-2(\Omega(G)-|U|).\\
\end{split}
\end{equation}
We have $U=\Omega(G)$ and $G$ has at most two pendant vertices. 

If $|B(G)|=2$, then by \eqref{eq-B}, $\deg(x)=2$ for any $x\in\Omega(G)$. This implies that $G=P_{n+1}$. However, 
$$h_D(P_{n+1})=\frac{2}{n-2}>\frac{1}{n-1}.$$
Thus, $|B(G)|=1$. By \eqref{eq-B},  
$$2n-2\leq \sum_{x\in \Omega(G)}\deg(x)=2n-1.$$
This implies there is only one interior vertex of degree $3$ and the other interior vertices are all of degree $2$. Let $v_n$ be the boundary vertex of $G$, and 
$$P:v_n\sim v_{n-1}\sim \cdots\sim v_i$$ 
be the shortest path from $v_n$ to the interior vertex $v_i$ of degree $3$. Let 
$$G'=G-v_n-v_{n-1}-\cdots-v_{i+1}.$$
Then, every vertex $G'$ is of degree $2$ which implies that $G'$ is a cycle of length $i$. Thus $G=T_{n,i}$. Conversely, it is clear that 
$$h_D(T_{n,i})=\frac{1}{n-1}.$$
This completes the proof of the theorem.
\end{proof}

\end{document}